\theoremstyle{plain}
\newtheorem{theorem}{Theorem}[section]
\newtheorem{corollary}[theorem]{Corollary}
\newtheorem{lemma}[theorem]{Lemma}
\theoremstyle{definition}
\newtheorem{definition}[theorem]{Definition}
\theoremstyle{remark}
\newtheorem*{remark}{Remark}
\newcommand{\F}{\mathbb{F}}
\newcommand{\re}{\mathrm{Re}}
\newcommand{\Tr}{{\mathrm{Tr}_k}}
\newcommand{\Leg}{\mathrm{Leg}}
\numberwithin{equation}{section}
\begin{document}
\title[Sato-Tate Distribution of $p$-adic hypergeometric functions]
{Sato-Tate Distribution of $p$-adic hypergeometric functions}

\author{Sudhir Pujahari}
\address{School of Mathematical Sciences, National Institute of Science Education and Research, Bhubaneswar, An OCC of Homi Bhabha National Institute,  P. O. Jatni,  Khurda 752050, Odisha, India.}
\email{spujahari@niser.ac.in}
\author{Neelam Saikia}
\address{Faculty of Mathematics, University of Vienna, Austria 1010.}
 \email{nlmsaikia1@gmail.com}

\keywords{$p$-adic hypergeometric functions; Sato-Tate distributions; Trace of Frobenius}
\subjclass[2000]{11G20, 11T24, 33E50}

\begin{abstract}  
Recently Ono, Saad and the second author \cite{KHN} initiated a study of value distribution of certain families of Gaussian hypergeometric functions over large finite fields. They investigated two families of Gaussian hypergeometric functions and showed that they satisfy semicircular and Batman distributions. Motivated by their results we aim to study distributions of certain families of hypergeometric functions in the $p$-adic setting over large finite fields. In particular, we consider two and six parameters families of hypergeometric functions in the $p$-adic setting and obtain that their limiting distributions are semicircular over large finite fields. 
In the process of doing this we also express the traces of $p$th Hecke operators acting on the spaces of cusp forms of even weight $k\geq4$ and levels 4 and 8 in terms of $p$-adic hypergeometric function which is of independent interest. These results can be viewed as $p$-adic analogous of some trace formulas of \cite{ah, ah-ono, fop}.
\end{abstract}

\maketitle
\section{Introduction} 

Let $p$ be an odd prime and let $\F_q$ denote the finite field with $q=p^r$ elements. In \cite{Greene}, Greene introduced Gaussian hypergeometric functions over finite fields using Jacobi sums. One of the important fact about these functions is that they satisfy many analogous hypergeometric type identities. Recently Ono, Saad and the second author \cite{KHN} initiated a study of value distributions of certain families of these functions over random large finite fields $\F_q.$ More precisely, they investigated the distributions of the normalized values of the following two Gaussian hypergeometric functions over $\F_q$ that are defined by
$$
_2F_1(\lambda)_q:={_2F_1}\left(\begin{matrix}
\phi, & \phi\\
~ &\varepsilon
\end{matrix}\mid \lambda\right)_q:
=\frac{q}{q-1}\sum\limits_{\chi}{\phi\chi\choose\chi}
{\phi\chi\choose\chi}\chi(\lambda)
$$
and $$_3F_2(\lambda)_q:={_2F_1}\left(\begin{matrix}
\phi, & \phi, & \phi\\
~ &\varepsilon, & \varepsilon
\end{matrix}\mid \lambda\right)_q:
=\frac{q}{q-1}\sum\limits_{\chi}{\phi\chi\choose\chi}{\phi\chi\choose\chi}{\phi\chi\choose\chi}\chi(\lambda),
$$ 
where $\phi$ and $\varepsilon$ are quadratic and trivial characters of $\F_q^{\times}$ and the sums on the extreme rights run over all multiplicative characters \footnote{$\chi(0):=0$ for all multiplicative characters of $\F_q.$} of $\F_q.$ Moreover, the symbol ${A\choose B}:=\frac{B(-1)}{q}\sum\limits_{x\in\F_q}A(x)\overline{B}(1-x)$ is the normalized Jacobi sum. Ono et al. \cite{KHN} showed that for the $_2F_1(\lambda)_q$ function the limiting distribution is semicircular whereas for the $_3F_2(\lambda)_q$ function the distribution is Batman. In this paper our goal is to study similar questions for $p$-adic hypergeometric functions over random large finite fields.

McCarthy \cite{mccarthy1, mccarthy2} defined $p$-adic hypergeometric functions using $p$-adic gamma functions extending Greene's hypergeometric functions \cite{Greene} for wider classes of primes. These functions appear in the study of Frobenius trace of elliptic curves \cite{mccarthy2}, Fourier coefficients of Hecke eigen forms \cite{PS}, and proofs of supercongruence type identities \cite{FM}.

Let $\Gamma_p(\cdot)$ be the Morita's $p$-adic gamma function. Let $\omega$ be the Teichm\"{u}ller character \footnote{$\omega(t)\equiv t\pmod{p}$ for all $t\in\F_p$ and $\omega(0):=0.$} of $\mathbb{F}_p$ and $\overline{\omega}$ denote its character inverse. For $x\in\mathbb{Q}$ let $\lfloor x\rfloor$ denote the greatest integer less than or equal to $x$ and $\langle x\rangle$ denote the fractional part of $x$, satisfying $0\leq\langle x\rangle<1$. Using these notation $p$-adic hypergeometric function is defined as follows.
\begin{definition}\cite[Definition 5.1]{mccarthy2} \label{defin1}
Let $p$ be an odd prime and $t \in \mathbb{F}_p$.
For positive integer $n$ and $1\leq k\leq n$, let $a_k$, $b_k$ $\in \mathbb{Q}\cap \mathbb{Z}_p$.
Then 
\begin{align}
&_n{G}_n\left[\begin{array}{cccc}
             a_1, & a_2, & \ldots, & a_n \\
             b_1, & b_2, & \ldots, & b_n
           \end{array}\mid t
 \right]_p:=\frac{-1}{p-1}\sum_{j=0}^{p-2}(-1)^{jn}~~\overline{\omega}^j(t)\notag\\
&\times \prod\limits_{k=1}^n(-p)^{-\lfloor \langle a_k \rangle-\frac{j}{p-1} \rfloor -\lfloor\langle -b_k \rangle +\frac{j}{p-1}\rfloor}
 \frac{\Gamma_p(\langle a_k-\frac{j}{p-1}\rangle)}{\Gamma_p(\langle a_k \rangle)}
 \frac{\Gamma_p(\langle -b_k+\frac{j}{p-1} \rangle)}{\Gamma_p(\langle -b_k \rangle)}.\notag
\end{align}
\end{definition}
In this paper we aim to investigate the value distributions of certain families of these functions over large finite fields $\F_p$. Namely, we study the value distributions of the following families of $_nG_n$-functions for $n=2,6.$  
If $\lambda\in\F_p^{\times}$ and $\psi_6=\omega^{\frac{p-1}{6}}$ is a character of order 6, then for $\lambda\neq-1$ we define
$$
_2G_2(\lambda)_p:=p \psi_6(2)\phi(1+\lambda)\cdot{_2G_2}\left[\begin{array}{cc}
               \frac{2}{3}, & \frac{2}{3} \vspace{.1 cm}\\
               \frac{5}{12}, & \frac{11}{12}
             \end{array}\mid \frac{4\lambda}{(1+\lambda)^2}
            \right]_p.
$$


and
$$
_6G_6(\lambda)_p:=\phi(1+\lambda)\cdot{_6G_6}\left[\begin{array}{cccccc}
               \frac{1}{3}, & \frac{1}{3}, & \frac{2}{3}, & \frac{2}{3}, & 0, & 0 \vspace{.1 cm}\\
               \frac{1}{12}, & \frac{1}{4}, & \frac{5}{12}, & \frac{7}{12}, & \frac{3}{4}, & \frac{11}{12}
             \end{array}\mid \frac{2^6\lambda^3}{(1+\lambda)^6}
\right]_p.$$

As our first theorem we obtain the moments of values of ${_2G_2}(\lambda)_p.$
\begin{theorem}\label{moment-1}
Let $m$ be a fixed positive integer and $p\equiv1\pmod{3}$ be a prime. Then as $p\rightarrow\infty$
$$
\sum\limits_{\lambda\in\F_p}{_2G_2}(\lambda)_p^{m}=
\begin{cases}
o_{m}(p^{\frac{m}{2}+1}) & \text{if}\ \ m \  is\  \text{odd}\vspace{.2 cm}\\
\frac{(2n)!}{n!(n+1)!}p^{n+1}+o_m(p^{n+1}) & \text{if}\ \ m=2n \  is\  \text{even}.
\end{cases}
$$
\end{theorem}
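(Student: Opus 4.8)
The plan is to reduce Theorem \ref{moment-1} to a horizontal equidistribution statement of Birch type for a one‑parameter family of elliptic curves. The crucial first step is to show that, for $\lambda\neq -1$, the value ${}_2G_2(\lambda)_p$ is (up to sign) the trace of Frobenius $a_p(E_\lambda):=p+1-\#E_\lambda(\F_p)$ of an explicit family $E_\lambda/\F_p$ of elliptic curves. This should follow from McCarthy's expression of Frobenius traces of elliptic curves in terms of $p$‑adic hypergeometric functions \cite{mccarthy2}, combined with a quadratic/Landen‑type hypergeometric transformation to account for the argument $\tfrac{4\lambda}{(1+\lambda)^2}$ and to absorb the normalizing factor $p\,\psi_6(2)\phi(1+\lambda)$; the shape of the parameters $\left[\tfrac23,\tfrac23;\tfrac5{12},\tfrac{11}{12}\right]$ is exactly what arises from such a curve. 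In particular this shows ${}_2G_2(\lambda)_p\in\Z$ and, by Hasse's bound, ${}_2G_2(\lambda)_p=2\sqrt p\,\cos\theta_\lambda$ for some $\theta_\lambda\in[0,\pi]$. One then checks that the family $\{E_\lambda\}$ is non‑isotrivial, i.e. its $j$‑invariant is a non‑constant rational function of $\lambda$, so that the geometric monodromy group of the associated rank‑two lisse $\overline{\Q}_\ell$‑sheaf $\mathcal F=R^1\pi_*\overline{\Q}_\ell$ on the open set $U\subset\mathbb A^1$ of good fibres is all of $\mathrm{SL}_2$.

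Granting this identification, the moment estimate is the standard argument underlying Birch's theorem (the same mechanism used in \cite{KHN} for the Gaussian ${}_2F_1$). Set $u_\lambda:={}_2G_2(\lambda)_p/\sqrt p=2\cos\theta_\lambda$. Expanding in Chebyshev polynomials of the second kind, $u^m=\sum_{j=0}^m b_{m,j}U_j(u)$ with $U_j(2\cos\theta)=\sin((j+1)\theta)/\sin\theta$; a short lattice‑path count shows the constant term equals $b_{m,0}=\frac{m!}{(m/2)!\,(m/2+1)!}$ (the Catalan number) when $m$ is even and $b_{m,0}=0$ when $m$ is odd. Since the eigenvalues of $\mathrm{Frob}_\lambda$ on $\mathcal F_\lambda$ are $\sqrt p\,e^{\pm i\theta_\lambda}$, one has $U_j(u_\lambda)=p^{-j/2}\,\mathrm{Tr}\!\big(\mathrm{Frob}_\lambda\mid \mathrm{Sym}^j\mathcal F\big)$, so summing over $\lambda\in U(\F_p)$ and applying the Grothendieck–Lefschetz trace formula gives
\[
\sum_{\lambda\in U(\F_p)}U_j(u_\lambda)=p^{-j/2}\sum_{i=0}^{2}(-1)^i\,\mathrm{Tr}\!\big(\mathrm{Frob}_p\mid H^i_c(U_{\overline{\F_p}},\mathrm{Sym}^j\mathcal F)\big).
\]
For $j=0$ this is $\#U(\F_p)=p+O(1)$. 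For $j\geq 1$, since $\mathrm{Sym}^j$ of the standard representation of $\mathrm{SL}_2$ is irreducible and nontrivial, $H^0_c=H^2_c=0$, so only $H^1_c$ survives; it is mixed of weight $\leq j+1$ by Deligne, with dimension bounded independently of $p$ (the Euler characteristic of the fixed family), whence $\sum_\lambda U_j(u_\lambda)=O_j(p^{1/2})$.

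Putting the pieces together,
\[
\sum_{\lambda\in U(\F_p)}{}_2G_2(\lambda)_p^{\,m}=p^{m/2}\sum_{\lambda\in U(\F_p)}u_\lambda^m=b_{m,0}\,p^{m/2}\big(p+O(1)\big)+O_m\big(p^{m/2+1/2}\big),
\]
which equals $\frac{(2n)!}{n!(n+1)!}p^{n+1}+O_m(p^{n+1/2})$ when $m=2n$ and $O_m(p^{n+1})$ when $m=2n+1$. The finitely many excluded $\lambda$ (namely $\lambda=-1$, $\lambda=0$, and the singular fibres), each contributing a value of size $O(\sqrt p)$, add at most $O_m(p^{m/2})$ in total. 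Since $O_m(p^{n+1/2})=o_m(p^{n+1})$ and $O_m(p^{n+1})=o_m(p^{n+3/2})=o_m(p^{m/2+1})$ for odd $m=2n+1$, the claimed asymptotics follow.

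The main obstacle is the first step: pinning down the elliptic curve $E_\lambda$ and proving ${}_2G_2(\lambda)_p=\pm a_p(E_\lambda)$, which is where the precise parameters, the substitution $\lambda\mapsto\tfrac{4\lambda}{(1+\lambda)^2}$ and the constant $p\,\psi_6(2)\phi(1+\lambda)$ must all conspire; this requires careful bookkeeping with Morita's $p$‑adic gamma function, the Gross–Koblitz formula, and a hypergeometric transformation in McCarthy's ${}_nG_n$ calculus, together with the verification of non‑isotriviality (so that the monodromy is genuinely $\mathrm{SL}_2$ rather than a smaller group, which would change the limiting law). This identity is in any case of independent interest, in the spirit of the trace formulas mentioned in the introduction. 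Alternatively, one can bypass the geometric language entirely by rewriting ${}_2G_2(\lambda)_p$ as an explicit multiple Gauss sum via Gross–Koblitz, so that $\sum_\lambda {}_2G_2(\lambda)_p^{\,m}$ becomes a point count on an explicit variety to be estimated directly by Weil–Deligne; the $\mathrm{SL}_2$‑moment bookkeeping then resurfaces when one identifies the main term as the Catalan number.
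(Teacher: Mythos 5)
Your overall strategy---identify ${}_2G_2(\lambda)_p$ with a Frobenius trace of an elliptic family and then run a Birch-type moment computation---is the same as the paper's, but the step you explicitly defer (``the main obstacle'') is precisely the technical content of the paper's proof, and you never supply it. The paper proves, by rewriting Definition \ref{defin1} with the $p$-adic multiplication formula \eqref{prod}, shifting the summation index $j\mapsto j+\frac{p-1}{3}$ (this is exactly where the hypothesis $p\equiv1\pmod{3}$ enters, a point your sketch never touches), applying Gross--Koblitz and a Davenport--Hasse-type step to pass to Gauss sums, and then using character orthogonality, that for $\lambda\neq0,\pm1$ one has ${}_2G_2(\lambda)_p=\phi(-2)\,a_p(\lambda)$, where $a_p(\lambda)$ is the trace of Frobenius of the Legendre curve $y^2=x(x-1)(x-\lambda)$. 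Asserting that such an identity ``should follow'' from McCarthy's results plus an unspecified Landen-type transformation, without pinning down the curve, verifying that the prefactor $p\,\psi_6(2)\phi(1+\lambda)$ and the argument $\frac{4\lambda}{(1+\lambda)^2}$ produce a clean constant $\pm1$ rather than a $\lambda$-dependent character twist, is a genuine gap: integrality, the Hasse bound $|{}_2G_2(\lambda)_p|\leq 2\sqrt{p}$, non-isotriviality, the $\SL_2$ monodromy, and hence the Catalan main term all hinge on this unproven identification (a residual twist $\chi(\lambda)$ would change the even moments, and a smaller monodromy group would change the limiting law, as you yourself note).

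Granting the trace identity, the rest of your argument is correct but heavier than the paper's: you re-derive the needed moment asymptotics from scratch via Chebyshev expansion, the Grothendieck--Lefschetz trace formula and Deligne's weight bounds (essentially Katz's proof of vertical Sato--Tate for the Legendre family), whereas the paper simply quotes the known moments of $a_p(\lambda)$ (Theorem \ref{2F1moment}, extracted from Ono \cite{ono} and Ono--Saad--Saikia \cite{KHN}, i.e.\ Birch's theorem for the Legendre family) and then disposes of the finitely many exceptional parameters explicitly, e.g.\ ${}_2G_2(1)_p=\phi(-2)$ and the Gauss-sum evaluation \eqref{AP} of $a_p(-1)$, which is $O(p^{m/2})$ exactly as in your estimate. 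So your deduction of the asymptotics from the identification is sound in outline; what is missing is the identification itself, which is where all the work in the paper lies.
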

We use these moments to conclude the limiting behaviour of ${_2G_2(\lambda)_p}$ as $p\rightarrow\infty.$ 
If we view the normalized values $p^{-1/2}\cdot{_2G_2(\lambda)_p}\in[-2,2]$ as random variables over $\F_p$ then we obtain the limiting distribution of this function. More precisely we have the following distribution.
\begin{corollary}\label{distribution-2G2}
If $-2\leq a<b\leq2$ then
$$
\lim\limits_{p\rightarrow\infty}\frac{|\{\lambda:\ \  p^{-1/2} {_2G_2}(\lambda)_p\in[a,b]\}|}{p}=\frac{1}{2\pi}\int_a^b\sqrt{4-t^2}~dt.
$$

\end{corollary}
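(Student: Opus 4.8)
The plan is to run the classical method of moments, treating Theorem~\ref{moment-1} as a black box. First I would repackage its content as a statement about a sequence of probability measures: for each prime $p\equiv1\pmod 3$ set
$$
\mu_p:=\frac1p\sum_{\lambda\in\F_p}\delta_{\,p^{-1/2}\,{_2G_2}(\lambda)_p},
$$
a probability measure supported on the compact interval $[-2,2]$ (the two exceptional values $\lambda\in\{0,-1\}$ each carry mass $1/p=o(1)$ and are harmless). Since $\int_{-2}^2 t^m\,d\mu_p(t)=p^{-m/2-1}\sum_{\lambda\in\F_p}{_2G_2}(\lambda)_p^{\,m}$, dividing the two cases of Theorem~\ref{moment-1} by $p^{m/2+1}$ shows that for every fixed $m\ge1$ the $m$-th moment of $\mu_p$ converges, as $p\to\infty$, to $0$ when $m$ is odd and to the Catalan number $C_n=\frac{(2n)!}{n!(n+1)!}$ when $m=2n$.

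Next I would identify these limiting numbers as the moments of the semicircle law $\mu_{\mathrm{sc}}$ on $[-2,2]$ with density $\frac1{2\pi}\sqrt{4-t^2}$. This is a standard computation: the odd moments vanish by symmetry, and the substitution $t=2\sin\theta$ reduces $\int_{-2}^{2}t^{2n}\,\frac1{2\pi}\sqrt{4-t^2}\,dt$ to a Wallis integral whose value is exactly $C_n$. Hence $\int t^m\,d\mu_p\to\int t^m\,d\mu_{\mathrm{sc}}$ for all $m\ge0$.

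Then I would invoke the method of moments in its standard form. All the $\mu_p$ live in the fixed compact set $[-2,2]$, so the family $\{\mu_p\}$ is tight; and $\mu_{\mathrm{sc}}$, being compactly supported, is the unique probability measure on $\R$ with moment sequence $(C_n)$ --- its moment problem is determinate, e.g.\ because by Stone--Weierstrass every continuous function on $[-2,2]$ is a uniform limit of polynomials, so the moments pin down $\int f\,d\nu$ for all $f\in C([-2,2])$. Consequently every weak subsequential limit of $(\mu_p)$ is $\mu_{\mathrm{sc}}$, i.e.\ $\mu_p\Rightarrow\mu_{\mathrm{sc}}$. Finally, $\mu_{\mathrm{sc}}$ is absolutely continuous, so for any $-2\le a<b\le2$ the boundary set $\{a,b\}$ is $\mu_{\mathrm{sc}}$-null, and the portmanteau theorem gives
$$
\frac{|\{\lambda:\ p^{-1/2}\,{_2G_2}(\lambda)_p\in[a,b]\}|}{p}=\mu_p([a,b])\xrightarrow[p\to\infty]{}\mu_{\mathrm{sc}}([a,b])=\frac1{2\pi}\int_a^b\sqrt{4-t^2}\,dt,
$$
which is the assertion of Corollary~\ref{distribution-2G2}.

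I do not expect a substantive obstacle: the arithmetic difficulty is entirely contained in Theorem~\ref{moment-1}, and what remains is soft. The only points needing attention are (i) the a priori location $p^{-1/2}\,{_2G_2}(\lambda)_p\in[-2,2]$, which makes the $\mu_p$ share a common compact support --- alternatively, tightness follows for free from the uniformly bounded second moments (which converge to $C_1=1$) via Chebyshev, and then moment convergence to a determinate law still yields weak convergence; and (ii) quoting the method of moments in the version that requires the \emph{limit} to be moment-determinate, which is automatic here because $\mu_{\mathrm{sc}}$ has bounded support.
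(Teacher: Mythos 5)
Your proposal is correct and coincides with the paper's (implicit) argument: the corollary is deduced from Theorem \ref{moment-1} exactly by this standard method of moments, using that the limiting moments are the Catalan numbers, i.e.\ the moments of the semicircle law on $[-2,2]$, which is moment-determinate by compact support (the a priori bound $p^{-1/2}\,{_2G_2}(\lambda)_p\in[-2,2]$ being justified by the identity ${_2G_2}(\lambda)_p=\phi(-2)a_p(\lambda)$ and the Hasse bound). The paper gives no further details, so your write-up simply fills in the same soft probabilistic steps the authors take for granted.
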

We also consider these problems for the ${_6G_6(\lambda)_p}$ functions.
\begin{theorem}\label{moment-2}
Let $m$ be a fixed positive integer and $p\equiv2\pmod{3}$ be a prime. Then as $p\rightarrow\infty$
$$
\sum\limits_{\lambda\in\F_p}{_6G_6}(\lambda)_p^{m}=
\begin{cases}
o_{m}(p^{\frac{m}{2}+1}) & \text{if}\ \ m \  is\  \text{odd}\vspace{.2 cm}\\
\frac{(2n)!}{n!(n+1)!}p^{n+1}+o_m(p^{n+1}) & \text{if}\ \ m=2n \  is\  \text{even}.
\end{cases}
$$
\end{theorem}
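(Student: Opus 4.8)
The plan is to reduce Theorem~\ref{moment-2} to the vertical Sato--Tate equidistribution for a one-parameter family of elliptic curves, in exact parallel with the proof of Theorem~\ref{moment-1}. The first step is to produce a closed form for $_6G_6(\lambda)_p$: I would show (mirroring what is done for $_2G_2$ under the complementary congruence) that when $p\equiv2\pmod 3$ there is an explicit elliptic curve $E_\lambda/\F_p$, with $j$-invariant a non-constant rational function of $\lambda$, such that
\[
{_6G_6}(\lambda)_p=\delta(\lambda)\,a_p(E_\lambda)\qquad\text{for all }\lambda\in\F_p\setminus S,
\]
where $a_p(E_\lambda)=p+1-\#E_\lambda(\F_p)$, $\delta(\lambda)\in\{\pm1\}$, and $S$ is the exceptional set (those $\lambda$ with $1+\lambda=0$ or $E_\lambda$ singular), of size $|S|=O(1)$. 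Fixing $m$, and writing $a_p(E_\lambda)=2\sqrt p\cos\theta_\lambda$ with $\theta_\lambda\in[0,\pi]$ at the primes of good reduction, one then has
\[
\sum_{\lambda\in\F_p}{_6G_6}(\lambda)_p^{m}=\sum_{\lambda\notin S}\delta(\lambda)^m\bigl(2\sqrt p\cos\theta_\lambda\bigr)^m+O_m\bigl(p^{m/2}\bigr),
\]
since each of the $O(1)$ terms with $\lambda\in S$ is bounded by $(2\sqrt p)^m$. For even $m$ the factor $\delta(\lambda)^m=1$ disappears, and for odd $m$ we will need only an upper bound.

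The second step is the equidistribution input. Because the family $\{E_\lambda\}$ is non-isotrivial and carries no complex multiplication over $\overline{\F_p(\lambda)}$, its geometric monodromy group is all of $\mathrm{SL}_2$, so Deligne's equidistribution theorem (in the form due to Katz) shows that the Frobenius angles $\{\theta_\lambda:\lambda\in\F_p\setminus S\}$ become equidistributed with respect to the Sato--Tate measure $d\mu_{ST}=\tfrac{2}{\pi}\sin^2\theta\,d\theta$ as $p\to\infty$. Applying this to the polynomial test function $\theta\mapsto(2\cos\theta)^m$ gives
\[
\frac1p\sum_{\lambda\notin S}\Bigl(\tfrac{a_p(E_\lambda)}{\sqrt p}\Bigr)^m=\int_0^\pi(2\cos\theta)^m\,d\mu_{ST}(\theta)+o_m(1).
\]
The integral on the right vanishes for odd $m$ and equals the $n$-th Catalan number $\frac{(2n)!}{n!(n+1)!}$ when $m=2n$; this is the standard identification of the even moments of the semicircular density $\frac{1}{2\pi}\sqrt{4-t^2}$ of Corollary~\ref{distribution-2G2} with the Catalan numbers. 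Multiplying through by $p^{m/2+1}$, feeding the result into the previous display, and absorbing the $O_m(p^{m/2})$ error into $o_m(p^{m/2+1})$ yields exactly the two cases of Theorem~\ref{moment-2}. (Alternatively, one could compute these moment sums by hand, expanding in Jacobi sums and invoking Weil's bounds in the style of Birch; the equidistribution route is the cleanest given that the claimed error is only $o_m$.)

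The step I expect to be the main obstacle is not the moment bookkeeping above --- routine once the curve is identified --- but the verification that the family $E_\lambda$ attached to $_6G_6(\lambda)_p$ really does have full $\mathrm{SL}_2$ geometric monodromy, equivalently that it is non-isotrivial and not geometrically CM. This is precisely what forces the limiting law to be the semicircle rather than a singular measure, and it is obtained by writing $E_\lambda$ down explicitly and computing its $j$-invariant, discriminant and conductor as functions of $\lambda$; the hypothesis $p\equiv2\pmod 3$ enters only to make the hypergeometric formula for $a_p(E_\lambda)$ valid. One should additionally check that the exceptional set $S$ together with the finitely many primes of bad reduction contribute only $O_m(p^{m/2})$, which is harmless.
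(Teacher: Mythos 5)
Your overall strategy coincides with the paper's in outline: reduce the moment sum to traces of Frobenius of an explicit elliptic curve family and then invoke known moment asymptotics (the paper uses its Theorem \ref{2F1moment}, which repackages Ono's evaluation and the Birch-type moments of \cite{KHN}; your Katz--Deligne vertical Sato--Tate argument is an acceptable alternative for that part, since the curve in question turns out to be the Legendre family, whose full monodromy is classical). The genuine gap is in your first step: the identity ${_6G_6}(\lambda)_p=\pm\,a_p(E_\lambda)$ is not something you can wave through "mirroring the $_2G_2$ case" --- it is the actual mathematical content of this theorem. In the paper it occupies essentially the whole proof: one must expand the six-parameter ${_6G_6}$ via $p$-adic gamma functions, apply McCarthy's multiplication lemma (Lemma \ref{lemma-5}) together with delicate floor-function identities for the exponent of $\pi$, convert to Gauss sums by Gross--Koblitz, remove a $g(\omega^{12j})$ factor with the Davenport--Hasse relation, and only then perform the substitution $j\mapsto -j/3$ --- which is where $p\equiv2\pmod 3$ is used (it makes $3$ invertible modulo $p-1$), not merely to "make the formula valid" as you suggest --- to land on the same Gauss-sum expression as in \eqref{compare} and conclude ${_6G_6}(\lambda)_p=\phi(-1)\,a_p(\lambda)$ for the Legendre curve, as in \eqref{hypergeometric-trace-2}. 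You also misidentify the main obstacle: for the Legendre family non-isotriviality and absence of CM are standard, so the monodromy check you flag is routine, whereas the hypergeometric reduction you defer is not.

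A secondary, fixable flaw: you allow the sign $\delta(\lambda)\in\{\pm1\}$ to depend on $\lambda$ and then say that for odd $m$ "only an upper bound" is needed. With a genuinely varying sign, the trivial bound $\sum_{\lambda}|2\sqrt p\cos\theta_\lambda|^m$ gives only $O_m(p^{m/2+1})$, which does not yield the claimed $o_m(p^{m/2+1})$; equidistribution of the angles alone says nothing about cancellation against an arbitrary sign. The argument works because the sign is in fact constant in $\lambda$ (it is $\phi(-1)$, depending only on $p$), but that again requires having actually carried out the reduction you postponed.
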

Similarly as in Corollary \ref{distribution-2G2}, we conclude the limiting distribution of $p^{-1/2}\cdot{_6G_6(\lambda)_p}\in[-2,2]$ in the following Corollary.
\begin{corollary}\label{distribution-6G6}

If $-2\leq a<b\leq2$ then
$$
\lim\limits_{p\rightarrow\infty}\frac{|\{\lambda:\ \  p^{-1/2}\cdot{_6G_6}(\lambda)_p\in[a,b]\}|}{p}=\frac{1}{2\pi}\int_a^b
\sqrt{4-t^2}~dt.
$$

\end{corollary}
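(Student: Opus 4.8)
The plan is to derive Corollary \ref{distribution-6G6} from the moment computation in Theorem \ref{moment-2} by the classical method of moments, exactly as Corollary \ref{distribution-2G2} follows from Theorem \ref{moment-1}. First I would set up the probability space: for each prime $p\equiv2\pmod 3$, equip $\F_p$ with the uniform measure and consider the random variable $X_p(\lambda):=p^{-1/2}\cdot{_6G_6}(\lambda)_p$. The $m$-th moment of $X_p$ is $p^{-1}\sum_{\lambda\in\F_p}X_p(\lambda)^m=p^{-m/2-1}\sum_{\lambda\in\F_p}{_6G_6}(\lambda)_p^m$. Feeding in Theorem \ref{moment-2}: for $m$ odd this is $p^{-m/2-1}\cdot o_m(p^{m/2+1})=o_m(1)\to 0$, and for $m=2n$ even it is $p^{-n-1}\left(\tfrac{(2n)!}{n!(n+1)!}p^{n+1}+o_m(p^{n+1})\right)\to \tfrac{(2n)!}{n!(n+1)!}=C_n$, the $n$-th Catalan number.

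Next I would identify the limiting moment sequence with the semicircle law on $[-2,2]$. The measure $\tfrac{1}{2\pi}\sqrt{4-t^2}\,dt$ on $[-2,2]$ has odd moments zero by symmetry and even moments $\tfrac{1}{2\pi}\int_{-2}^2 t^{2n}\sqrt{4-t^2}\,dt=C_n$; this is a standard computation (substitute $t=2\cos\theta$, or cite the well-known fact). So the limiting moments of $X_p$ coincide with those of the semicircle distribution. Since the semicircle law is compactly supported, it is determined by its moments (Carleman's condition is trivially satisfied, or one invokes the fact that a measure with bounded support is moment-determinate). Therefore the method of moments applies: the sequence of measures $\mu_p:=(X_p)_*(\text{uniform on }\F_p)$ converges weakly to the semicircle law.

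Finally I would translate weak convergence into the stated convergence of proportions. Because the limiting measure $\tfrac{1}{2\pi}\sqrt{4-t^2}\,dt$ is absolutely continuous, it assigns zero mass to the boundary points $\{a\}$ and $\{b\}$, so weak convergence gives $\mu_p([a,b])\to \tfrac{1}{2\pi}\int_a^b\sqrt{4-t^2}\,dt$ for every $-2\le a<b\le 2$; and $\mu_p([a,b])$ is precisely $p^{-1}|\{\lambda\in\F_p:\ p^{-1/2}\cdot{_6G_6}(\lambda)_p\in[a,b]\}|$, which is the left-hand side of the Corollary. One technical point worth a sentence: one must know a priori that $p^{-1/2}\cdot{_6G_6}(\lambda)_p\in[-2,2]$ (stated in the text preceding the Corollary), so that no mass escapes to infinity and weak convergence on the compact interval is the whole story; this bound follows from the interpretation of ${_6G_6}(\lambda)_p$ as a normalized Frobenius trace established elsewhere in the paper.

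The main obstacle is not in this deduction — which is essentially formal once Theorem \ref{moment-2} is in hand — but rather sits upstream in proving Theorem \ref{moment-2} itself; the corollary's proof is a routine invocation of the method of moments together with the elementary evaluation of the Catalan-number moments of the semicircle law. I would simply remark that the argument is verbatim the same as that of Corollary \ref{distribution-2G2}, replacing Theorem \ref{moment-1} by Theorem \ref{moment-2}, and omit repeating the details.
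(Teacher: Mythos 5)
Your proposal is correct and follows exactly the route the paper intends: the method of moments applied to Theorem \ref{moment-2}, identifying the limiting moments with the Catalan-number moments of the semicircle law (which is moment-determinate by compact support), and passing from weak convergence to convergence of interval proportions since the limit law has no atoms — precisely the argument implicit in Corollary \ref{distribution-2G2}, which the paper invokes verbatim for this case. No gap; the only substantive input is Theorem \ref{moment-2} itself, as you note.
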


\begin{remark}
It is important to note that these results can be extended to $_2G_2(\lambda)_q$ and $_6G_6(\lambda)_q$ over finite fields $\F_q$ where $q=p^r$ using similar arguments. For simplicity we choose $q=p.$
\end{remark}

\section{Traces of Hecke operators and hypergeometric functions}
It turns out that the hypergeometric functions $_2G_2(\lambda)_p$ and $_6G_6(\lambda)_p$ can be related to the traces of Hecke operators acting on the spaces of cusp forms. 
More precisely, for positive integers $N$ and $k$, let $S(N,k)$ be the space of cusp forms of weight $k$ with respect to the congruence subgroup $\Gamma_0(N)$ and let $\Tr(\Gamma_0(N),p)$ denote the trace of the $p$th Hecke operator acting on the space 
$S (N,k)$. In a series of papers Ahlgren \cite{ah}, Ahlgren-Ono \cite{ah-ono} and 
Frechette-Ono-Papanikolas \cite{fop} studied the Eichler-Selberg type trace formulas for the $p$th Hecke operators acting on the spaces $S(N,k)$ for $N=2,4,8$ and established trace formulas using Gaussian hypergeometric functions over finite fields.
Here we obtain the following results expressing the trace formulas $\Tr(\Gamma_0(4),p)$ and $\Tr(\Gamma_0(8),p)$ using $p$-adic hypergeometric functions  $_2G_2(\lambda)_p$ and $_6G_6(\lambda)_p$. Note that both these functions vanish at $\lambda=-1,$ therefore, we define refinements of these functions. To this end we define the following two functions.
$$
{_2\widetilde{G}_2}(\lambda)_p:= {_2G_2}(\lambda)_p-\delta(1+\lambda)\cdot\Delta(p)\cdot
A_4(-1)\re\left(\frac{g(\phi A_4)g(\overline{A}_4)}{g(\phi)}\right)
$$
and
$$
{_6\widetilde{G}_6}(\lambda)_p:= {_6G_6}(\lambda)_p-\delta(1+\lambda)\cdot
\Delta(p)\cdot A_4(-1)\re\left(\frac{g(\phi A_4)g(\overline{A}_4)}{g(\phi)}\right),
$$
where $A_4$ is a character of order 4,

$\delta(x):=\begin{cases}
1, & \text{if} \ x=0\\
0, & \text{if} \ x\neq0,
\end{cases}
$
and 
$\Delta(p):=\begin{cases}
1, & \text{if} \ p\equiv1\pmod{4}\\
0, & \text{if} \ p\equiv3\pmod{4}.
\end{cases}
$

\begin{theorem}\label{Trace-case-1}
If $p\equiv1\pmod3$ is an odd prime and $k\geq4$ is even then 
$$
\Tr(\Gamma_0(4),p)=-3-\sum_{\lambda=2}^{p-1}P_k(_2\widetilde{G}_2(\lambda)_p,p)
$$
and 
$$
\Tr(\Gamma_0(8),p)=-4-\sum_{\lambda=2}^{p-1}
P_k(_2\widetilde{G}_2(\lambda^2)_p,p).
$$
\end{theorem}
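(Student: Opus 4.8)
The plan is to deduce both formulas from the known Gaussian hypergeometric trace formulas of Ahlgren, Ahlgren--Ono and Frechette--Ono--Papanikolas \cite{ah,ah-ono,fop} by a term-by-term translation. Those papers already express $\Tr(\Gamma_0(4),p)$ and $\Tr(\Gamma_0(8),p)$, for odd $p$ and even $k\ge4$, in the shape ``(an explicit constant independent of $p$) $-\sum_{\lambda}P_k(\cdot,p)$'', the summand being the Eichler--Selberg polynomial $P_k$ --- whose defining property, via the Eichler--Shimura relation, is that $P_k(a_p(E),p)=\operatorname{tr}\!\big(\Frob_p\mid\mathrm{Sym}^{k-2}E\big)$ for an elliptic curve $E/\F_p$ --- evaluated at a normalized value of Greene's ${_2F_1}\!\left(\begin{smallmatrix}\phi,\,\phi\\ \varepsilon\end{smallmatrix}\mid\,\cdot\,\right)_p$, with the level-$8$ version naturally involving the argument $\lambda^2$ coming from a quadratic twist in the level structure. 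So it suffices to identify the Greene value attached to $\lambda$ with ${_2\widetilde{G}_2}(\lambda)_p$ for level $4$ and with ${_2\widetilde{G}_2}(\lambda^2)_p$ for level $8$, after aligning the summation (the values $\lambda=0,1$ are dropped because the underlying family degenerates there, exactly as in \cite{fop}).

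The first step is to establish the dictionary between Greene's ${_2F_1}$ and McCarthy's $_2G_2$. When $p\equiv1\pmod3$ the character $\psi_6=\omega^{(p-1)/6}$ and all characters of order dividing $12$ appearing as the parameters $\tfrac23,\tfrac5{12},\tfrac{11}{12}$ exist, and for $\lambda\notin\{0,1,-1\}$ one has an identity of the form
$$
{_2G_2}(\lambda)_p\;=\;p\,\psi_6(2)\,\phi(1+\lambda)\cdot{_2G_2}\!\left[\begin{matrix}\tfrac23,&\tfrac23\\ \tfrac5{12},&\tfrac{11}{12}\end{matrix}\;\Big|\;\frac{4\lambda}{(1+\lambda)^2}\right]_p\;=\;\pm\,a_p(E_\lambda),
$$
where $E_\lambda/\F_p$ is the one-parameter elliptic curve family underlying the classical formula (the ambiguous sign is immaterial, since $P_k(\cdot,p)$ is an even polynomial when $k$ is even). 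I would prove this by writing McCarthy's $_2G_2$ in Gauss sums through the Gross--Koblitz formula, matching it against the Jacobi-sum expansion of Greene's ${_2F_1}$ (or directly against $a_p(E_\lambda)$), and simplifying using the Hasse--Davenport product and reflection relations for $\Gamma_p$; the factor $\psi_6(2)$ and the rational argument $4\lambda/(1+\lambda)^2$ get pinned down here. The $_6G_6$ identity, with argument $2^6\lambda^3/(1+\lambda)^6$, is obtained the same way and is what makes the level-$8$ statement amenable; equivalently the level-$8$ case can be handled entirely in $_2G_2$ via the substitution $\lambda\mapsto\lambda^2$ already present in \cite{fop}.

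Next I would deal with the value $\lambda=-1$, which is exactly where the comparison degenerates: the argument $4\lambda/(1+\lambda)^2$ is undefined and the prefactor $\phi(1+\lambda)$ forces ${_2G_2}(-1)_p=0$, whereas $E_{-1}$ is a good (indeed CM) curve whose Frobenius trace is generally nonzero. A direct Gauss-sum evaluation of $a_p(E_{-1})$ produces $\Delta(p)\,A_4(-1)\,\re\!\big(g(\phi A_4)g(\overline{A}_4)/g(\phi)\big)$, which vanishes precisely when $p\equiv3\pmod4$ (consistent with $E_{-1}$ being supersingular there) and is exactly the correction subtracted in the definition of ${_2\widetilde{G}_2}$. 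Hence $P_k({_2\widetilde{G}_2}(\lambda)_p,p)=P_k(a_p(E_\lambda),p)$ for every $\lambda$ with $2\le\lambda\le p-1$, and likewise for the level-$8$ sum after $\lambda\mapsto\lambda^2$, the correction now firing at the fibres with $1+\lambda^2=0$. Substituting the dictionary and this correction into the formulas of \cite{fop,ah} then yields the two displayed identities, with the constants $-3$ and $-4$ carried over unchanged.

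The main obstacle is the comparison identity of the second step: pinning down the normalizing constant $p\,\psi_6(2)\,\phi(1+\lambda)$ and the exact rational argument requires delicate bookkeeping of Gauss sums under Hasse--Davenport and of $\Gamma_p$-values under Gross--Koblitz, and one must then check that the single exceptional fibre $\lambda=-1$ is repaired by exactly the stated Gauss-sum term so that no spurious $p$-independent constant is introduced. Everything else is a term-by-term match with the classical trace formula.
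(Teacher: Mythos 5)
Your proposal follows essentially the same route as the paper: the dictionary ${}_2G_2(\lambda)_p=\pm\,a_p(\lambda)$ proved via Gross--Koblitz and the Hasse--Davenport product (which is exactly how the paper obtains it in the proof of Theorem \ref{moment-1}), the observation that the Gauss-sum correction in ${}_2\widetilde{G}_2$ restores the value $a_p(-1)$ at the exceptional fibre $\lambda=-1$ (and at $1+\lambda^2=0$ in the level-$8$ sum), and substitution into Proposition 2.1 of \cite{fop} with the constants $-3$ and $-4$ unchanged. Your explicit use of the evenness of $P_k(\cdot,p)$ for even $k$ to absorb the ambiguous sign is, if anything, slightly more careful than the paper, which writes ${}_2\widetilde{G}_2(\lambda)_p=a_p(\lambda)$ even though its own identity \eqref{hypergeometric-trace-1} carries the factor $\phi(-2)$.
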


\begin{theorem}\label{Trace-case-2}
If $p\equiv2\pmod3$ is an odd prime and $k\geq4$ is even then 
$$
\Tr(\Gamma_0(4),p)=-3-\sum_{\lambda=2}^{p-1}P_k(_6\widetilde{G}_6
(\lambda)_p,p)
$$
and 
$$
\Tr(\Gamma_0(8),p)=-4-\sum_{x=2}^{p-1}P_k(_6\widetilde{G}_6(\lambda^2)_p,p).
$$
\end{theorem}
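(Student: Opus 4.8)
The plan is to deduce both identities from the Eichler--Selberg trace formula in the hypergeometric form of Ahlgren--Ono \cite{ah-ono} and Frechette--Ono--Papanikolas \cite{fop}. For every odd prime $p$ those papers express
$$\Tr(\Gamma_0(4),p)=-3-\sum_{\lambda=2}^{p-1}P_k(a(\lambda),p),\qquad\Tr(\Gamma_0(8),p)=-4-\sum_{\lambda=2}^{p-1}P_k(a(\lambda^2),p),$$
where $a(\lambda)$ is the $p$-th Frobenius trace of a fixed one-parameter family of elliptic curves (a twist of the Legendre family), $P_k$ is the weight-$k$ Gegenbauer-type polynomial of the trace formula, and the constants $-3,-4$ collect the elliptic, parabolic and Eisenstein contributions. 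Granting this input, Theorem \ref{Trace-case-2} reduces to the single evaluation
$$a(\lambda)={_6\widetilde{G}_6}(\lambda)_p\qquad\text{for all }\lambda\text{ and all primes }p\equiv2\pmod3,$$
the first formula being then immediate and the second following once the level-$8$ data is tied to the substitution $\lambda\mapsto\lambda^2$.

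First I would prove $a(\lambda)={_6G_6}(\lambda)_p$ for $\lambda\neq-1$. Writing $a(\lambda)$ as a Gauss/Jacobi-sum expression via Greene's finite-field hypergeometric functions as in \cite{fop}, the essential departure from the case $p\equiv1\pmod3$ treated in Theorem \ref{Trace-case-1} (where ${_2G_2}$ occurs) is that $\F_p^{\times}$ admits no character of order $3$; the Gauss sums that would involve such a character must therefore be descended from $\F_{p^2}$ by the Hasse--Davenport relation and rewritten, through the Gross--Koblitz formula, in terms of Morita's $p$-adic Gamma function $\Gamma_p$. Matching the resulting product of $\Gamma_p$-values together with the accompanying powers of $-p$ against Definition \ref{defin1} is exactly what forces the six upper parameters $\tfrac13,\tfrac13,\tfrac23,\tfrac23,0,0$, the six lower parameters $\tfrac1{12},\tfrac14,\tfrac5{12},\tfrac7{12},\tfrac34,\tfrac{11}{12}$, the argument $2^6\lambda^3/(1+\lambda)^6$ and the normalizing factor $\phi(1+\lambda)$; some quadratic and cubic transformation identities for $_pG$-functions are used along the way to bring the character sum into this normalized shape.

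It remains to handle $\lambda=-1$, where $\delta(1+\lambda)=1$. Both ${_2G_2}$ and ${_6G_6}$ vanish there because the hypergeometric argument degenerates, whereas $a(-1)$ need not. When $p\equiv3\pmod4$ the fibre at $\lambda=-1$ is supersingular, so $a(-1)=0$ and $\Delta(p)=0$; when $p\equiv1\pmod4$ a direct Gauss-sum (complex-multiplication) computation evaluates $a(-1)$ in terms of $g(\phi A_4)g(\overline{A}_4)/g(\phi)$ and produces precisely the correction term subtracted in the definition of ${_6\widetilde{G}_6}$. Hence $a(\lambda)={_6\widetilde{G}_6}(\lambda)_p$ for every $\lambda$, and substituting into the level-$4$ formula proves the first identity. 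For the second, I would invoke the additional fact from \cite{fop} that $S(8,k)$ is governed by the same family under the base change $\lambda\mapsto\lambda^2$, together with the attendant reindexing of the old-form and Eisenstein terms (which turns $-3$ into $-4$); substituting $a(\lambda^2)={_6\widetilde{G}_6}(\lambda^2)_p$ then yields $\Tr(\Gamma_0(8),p)=-4-\sum_{\lambda=2}^{p-1}P_k({_6\widetilde{G}_6}(\lambda^2)_p,p)$.

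The main obstacle is the evaluation $a(\lambda)={_6G_6}(\lambda)_p$ for $\lambda\neq-1$: the Hasse--Davenport descent and Gross--Koblitz conversion have to be organized so that the outcome is, term for term, the Frobenius trace of \cite{fop}. This is delicate because one must keep track of which $12$th-power residue characters actually live on $\F_p^{\times}$, handle the $(-p)$-prefactors and the fractional parts $\langle\,\cdot\,\rangle$ of Definition \ref{defin1} according to the residue of $p$ modulo $12$, and check that the resulting quantity is real and bounded by $2\sqrt p$, so that $P_k$ is genuinely evaluated on a Frobenius angle. The remaining ingredients --- the trace formula itself, its non-hypergeometric constants, and the passage from level $4$ to level $8$ --- are assembly of known results.
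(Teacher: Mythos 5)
Your overall route is the same as the paper's: both arguments rest on Proposition 2.1 of Frechette--Ono--Papanikolas \cite{fop}, which gives $\Tr(\Gamma_0(4),p)=-3-\sum_{\lambda=2}^{p-1}P_k(a_p(\lambda),p)$ and $\Tr(\Gamma_0(8),p)=-4-\sum_{\lambda=2}^{p-1}P_k(a_p(\lambda^2),p)$ for the Legendre family itself (no twist is needed), and both reduce the theorem to the single identity ${_6\widetilde{G}_6}(\lambda)_p=a_p(\lambda)$ for $\lambda\neq0,1$, with the fibre $\lambda=-1$ handled exactly as you describe (supersingular when $p\equiv3\pmod4$, a CM Gauss-sum evaluation matching the correction term when $p\equiv1\pmod4$). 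The difference lies in how that identity is obtained. In the paper it is already available: it is \eqref{hypergeometric-trace-2}, established inside the proof of Theorem \ref{moment-2}, so the proof of Theorem \ref{Trace-case-2} is just the substitution, exactly as for Theorem \ref{Trace-case-1}. Moreover, the paper's derivation of \eqref{hypergeometric-trace-2} never leaves $\F_p$ and uses no cubic character and no transformation identities for the $G$-functions: one expands Definition \ref{defin1}, collapses the twelve $\Gamma_p$-quotients via Lemma \ref{lemma-5}, converts to Gauss sums by Gross--Koblitz, applies Davenport--Hasse only with $n=2$, and then reindexes $j\mapsto -j/3$; this reindexing is a bijection of $\Z/(p-1)\Z$ precisely because $p\equiv2\pmod3$, which is the only place that hypothesis enters, and afterwards the sum is literally the one already evaluated in the ${_2G_2}$ case. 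Your proposed mechanism --- descending Gauss sums attached to cubic characters from $\F_{p^2}$ via Hasse--Davenport, in the spirit of \cite{fop}'s own treatment of $p\equiv2\pmod3$ --- is heavier and is only sketched; it is plausible, but McCarthy's $_nG_n$ functions were designed precisely so that this descent can be avoided. One caution if you execute your sketch: track the character constants carefully. The computation as carried out in the paper yields ${_6G_6}(\lambda)_p=\phi(-1)\,a_p(\lambda)$ (and ${_2G_2}(\lambda)_p=\phi(-2)\,a_p(\lambda)$ in the companion case), so the clean identity ${_6\widetilde{G}_6}(\lambda)_p=a_p(\lambda)$ that the trace formula substitution requires is not automatic for all residue classes of $p$ and must be verified rather than asserted, as your proposal currently does.
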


\section{Gauss sums and $p$-adic gamma function}
In this section we discuss some important theorems namely Gross-Koblitz formula and Davenport-Hasse relation. We also recall some basic results in Gauss sums and $p$-adic gamma functions. We begin by recalling the orthogonality relations satisfied by multiplicative characters. Let $\widehat{\mathbb{F}_p^\times}$ denote the cyclic group of multiplicative characters of $\F_p^{\times}.$

\begin{lemma}\emph{(\cite[Chapter 8]{ireland}).}\label{lemma-3} If $\chi$ is a multiplicative character \footnote{For all multiplicative characters $\chi$ including the trivial character $\chi(0):=0.$} of $\mathbb{F}_p^\times,$ then

\smallskip
\noindent                               
(1) $\sum\limits_{\chi\in \widehat{\mathbb{F}_p^\times}}\chi(x)~~=\left\{
                            \begin{array}{ll}
                              p-1 & \hbox{if~~ $x=1$;} \\
                              0 & \hbox{if ~~$x\neq1$.}
                            \end{array}
                          \right.$

\smallskip
\noindent  
(2)  $\sum\limits_{x\in \mathbb{F}_p}\chi(x)~~=\left\{
                            \begin{array}{ll}
                              p-1 & \hbox{if~~ $\chi=\varepsilon$;} \\
                              0 & \hbox{if ~~$\chi\neq\varepsilon$.}
                            \end{array}
                          \right.$

\end{lemma}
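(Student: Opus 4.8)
The plan is to derive both identities from two structural facts about $\F_p^{\times}$: that it is a cyclic group of order $p-1$, and that consequently its character group $\widehat{\F_p^{\times}}$ is also cyclic of order $p-1$. Throughout I would keep the convention $\chi(0):=0$ in view, so that in (2) the sum over $x\in\F_p$ is really a sum over $x\in\F_p^{\times}$, and in (1) the value $x=0$ makes every summand zero and is therefore subsumed under the case $x\neq1$.

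For part (2), if $\chi=\varepsilon$ then $\chi(x)=1$ for each of the $p-1$ elements of $\F_p^{\times}$, so the sum equals $p-1$. If $\chi\neq\varepsilon$, I would choose $y\in\F_p^{\times}$ with $\chi(y)\neq1$ and use that $x\mapsto yx$ permutes $\F_p^{\times}$: this gives $\chi(y)\sum_{x}\chi(x)=\sum_{x}\chi(yx)=\sum_{x}\chi(x)$, hence $(\chi(y)-1)\sum_{x}\chi(x)=0$ and the sum vanishes.

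Part (1) runs in parallel. If $x=1$ then $\chi(1)=1$ for every character, so the sum counts $|\widehat{\F_p^{\times}}|=p-1$. If $x\neq1$, I would produce a character $\psi$ with $\psi(x)\neq1$: fixing a generator $g$ of $\F_p^{\times}$ and writing $x=g^a$ with $a\not\equiv0\pmod{p-1}$, the character determined by $\psi(g)=e^{2\pi i/(p-1)}$ satisfies $\psi(x)=e^{2\pi i a/(p-1)}\neq1$. Since $\chi\mapsto\psi\chi$ permutes $\widehat{\F_p^{\times}}$, the same shift argument yields $\psi(x)\sum_{\chi}\chi(x)=\sum_{\chi}\chi(x)$, so $\sum_{\chi}\chi(x)=0$. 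Alternatively one may simply evaluate the geometric sum $\sum_{k=0}^{p-2}e^{2\pi i ak/(p-1)}$ directly to get $0$.

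There is no genuine obstacle here; the statement is classical and recorded in \cite[Chapter 8]{ireland}. The only points deserving a word of care are the bookkeeping forced by the convention $\chi(0)=0$ and the appeal to cyclicity of $\F_p^{\times}$ — equivalently, that the characters separate points of $\F_p^{\times}$ — since this is precisely what guarantees the shift argument produces a nonzero factor $\psi(x)-1$.
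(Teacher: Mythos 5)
Your proof is correct and is the standard orthogonality argument (the shift/permutation trick for both sums, with cyclicity of $\F_p^{\times}$ supplying a character $\psi$ with $\psi(x)\neq1$), which is precisely the treatment in the source \cite[Chapter 8]{ireland} that the paper cites without reproving. Your handling of the convention $\chi(0):=0$ and of the case $x=0$ in part (1) is also accurate, so nothing further is needed.
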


Let $\zeta_p$ be a primitive $p$th roots of unity $\chi$ be a multiplicative character ${\mathbb{F}_p^\times}.$ Then the Gauss sum is defined by
$$
g(\chi):=\sum\limits_{x\in \mathbb{F}_p}\chi(x)\zeta_p^{x}.
$$

\begin{theorem}\emph{(\cite[Davenport-Hasse Relation]{evans}).}\label{DH}
Let $n$ be a positive integer and let $p$ be a prime such that $p\equiv 1 \pmod{n}$. For multiplicative characters
$\chi, \psi \in \widehat{\mathbb{F}_p^\times}$, we have
\begin{align}
\prod\limits_{\chi^n=\varepsilon}g(\chi \psi)=-g(\psi^n)\psi(n^{-n})\prod\limits_{\chi^n=\varepsilon}g(\chi).\notag
\end{align}
\end{theorem}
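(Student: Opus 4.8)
This is the classical Hasse--Davenport product relation (cf. \cite{evans}); my plan is to follow the standard character-sum argument. I would fix a character $\eta$ of exact order $n$, so that $\{\chi:\chi^n=\varepsilon\}=\{\varepsilon,\eta,\dots,\eta^{n-1}\}$. Since $g(\varepsilon)=\sum_{x\in\F_p^{\times}}\zeta_p^x=-1$, the asserted identity is equivalent to
$$\prod_{j=0}^{n-1}g(\psi\eta^j)=\psi(n^{-n})\,g(\psi^n)\prod_{j=1}^{n-1}g(\eta^j).$$
I would first treat the generic situation, which is the one where $\psi^n\neq\varepsilon$ (equivalently, $\psi\eta^j\neq\varepsilon$ for every $j$). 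Inserting the definition of each Gauss sum and multiplying out gives
$$\prod_{j=0}^{n-1}g(\psi\eta^j)=\sum_{x_0,\dots,x_{n-1}\in\F_p^{\times}}\psi(x_0\cdots x_{n-1})\,\eta\bigl(x_1x_2^2\cdots x_{n-1}^{n-1}\bigr)\,\zeta_p^{\,x_0+\cdots+x_{n-1}}.$$

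The key move is to rescale every summation variable by the first one: put $x_0=s$ and $x_j=st_j$ for $1\le j\le n-1$, with $s,t_j\in\F_p^{\times}$. Then $x_0\cdots x_{n-1}=s^{n}\prod_j t_j$, the argument of $\eta$ becomes $s^{\,n(n-1)/2}\prod_j t_j^{\,j}$, and the exponent becomes $s(1+t_1+\cdots+t_{n-1})$. Carrying out the $s$-sum first collapses the inner sum to $g(\psi^n\mu)$ times $\overline{\psi^n\mu}(1+t_1+\cdots+t_{n-1})$, where $\mu:=\eta^{\,n(n-1)/2}$ is trivial when $n$ is odd and equals the quadratic character $\phi$ when $n$ is even; the terms with $1+t_1+\cdots+t_{n-1}=0$ drop out by the orthogonality relation of Lemma \ref{lemma-3}, provided $\psi^n\mu\neq\varepsilon$. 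What is left is $g(\psi^n\mu)$ times an $(n-1)$-fold Jacobi-type sum $S=S(\psi)$ in the variables $t_1,\dots,t_{n-1}$.

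The heart of the argument is the evaluation of $S$. I would do this by an iterated change of variables that peels off one ordinary Jacobi sum at a time --- equivalently, by induction on $n$ --- this being the finite-field analogue of the classical Gauss multiplication formula $\prod_{k=0}^{n-1}\Gamma(z+\tfrac{k}{n})=(2\pi)^{(n-1)/2}\,n^{\,1/2-nz}\,\Gamma(nz)$. The expected outcome is that $S$ equals $\psi(n^{-n})$ times a ratio of Gauss sums which, after multiplication by the prefactor $g(\psi^n\mu)$, telescopes to $g(\psi^n)\prod_{j=1}^{n-1}g(\eta^j)$; in particular the spurious twist $\mu$ that appears for even $n$ cancels against the denominator of that ratio, and the normalising factor $\psi(n^{-n})$ is exactly the counterpart of $n^{\,1/2-nz}$. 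This settles the displayed identity in the generic case. The boundary cases --- $\psi^n=\varepsilon$, and, when $n$ is even, $\psi^n=\phi$ --- I would handle separately: substituting directly into the definitions and using the orthogonality relations of Lemma \ref{lemma-3}, both sides reduce to explicit products of Gauss sums that are matched by inspection (for instance when $\psi^n=\varepsilon$ the left side is simply a reindexing of $\prod_{\chi^n=\varepsilon}g(\chi)$ and $\psi(n^{-n})=1$).

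The hard part will be the evaluation of the residual Jacobi-type sum $S$: choosing the precise sequence of substitutions that reproduces the factor $\psi(n^{-n})$, confirming the cancellation of the quadratic twist $\mu$ in the even case, and pinning down the global sign. An alternative that sidesteps this bookkeeping, once the Gross--Koblitz formula discussed in this section is available, is to rewrite every Gauss sum as a product of values of Morita's $p$-adic gamma function $\Gamma_p$ and then invoke the $p$-adic Gauss multiplication formula for $\Gamma_p$; there the factor $\psi(n^{-n})$ comes out directly as the image of $n^{\,1/2-nz}$ under Gross--Koblitz.
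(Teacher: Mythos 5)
Note first that the paper does not prove this statement at all: it is quoted, with the citation \cite{evans}, as a known classical result (the Hasse--Davenport product relation), so there is no internal proof to compare your sketch against; the comparison can only be with the standard literature.

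Judged on its own terms, your proposal has a genuine gap. The opening reduction is correct and standard: expanding $\prod_{j}g(\psi\eta^j)$, substituting $x_0=s$, $x_j=st_j$, and performing the $s$-sum, including the identification of the twist $\mu=\eta^{n(n-1)/2}$ as $\varepsilon$ or $\phi$ according to the parity of $n$, and the degenerate case $\psi^n=\varepsilon$ is handled correctly. But everything that actually constitutes the theorem --- that the residual $(n-1)$-fold sum $S$ evaluates to $\psi(n^{-n})$ times exactly the right ratio of Gauss sums, that the quadratic twist $\mu$ cancels when $n$ is even, and that the global sign comes out correctly --- is asserted only as the ``expected outcome'' of an unspecified iterated substitution/induction, which you yourself label the hard part. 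Since the factor $\psi(n^{-n})$ and the precise product $\prod_{j=1}^{n-1}g(\eta^j)$ are the entire content of the relation, what you have written is a plan rather than a proof. The even-$n$ boundary case $\psi^n\mu=\varepsilon$ (i.e. $\psi^n=\phi$) is also only claimed to ``match by inspection,'' yet there the $s$-summation behaves differently (the terms with $1+t_1+\cdots+t_{n-1}=0$ no longer drop out), so it requires a real computation. Your alternative route via Gross--Koblitz together with the multiplication formula for $\Gamma_p$ --- which is precisely the paper's formula \eqref{prod}, described there as a $p$-adic analogue of Davenport--Hasse --- is workable and non-circular provided the $\Gamma_p$-multiplication formula is taken from the $p$-adic analysis literature (e.g. \cite{kob}) rather than deduced from the Hasse--Davenport relation itself; but the present paper simply imports the theorem from \cite{evans}, so if you want a self-contained argument you must still carry out one of these two computations in full.
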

Let $\mathbb{Z}_p$ and $\mathbb{Q}_p$ denote the ring of $p$-adic integers and the field of $p$-adic numbers, respectively.
Let $\overline{\mathbb{Q}_p}$ be the algebraic closure of $\mathbb{Q}_p$ and $\mathbb{C}_p$ be the completion of $\overline{\mathbb{Q}_p}$.
For a positive integer $n$
the $p$-adic gamma function $\Gamma_p(n)$ is defined as
$$
\Gamma_p(n):=(-1)^n\prod\limits_{0<j<n,p\nmid j}j.\notag
$$
The domain of definition can be extended to all $x\in\mathbb{Z}_p$ by simply setting $\Gamma_p(0):=1$ and for $x\neq0$
$$
\Gamma_p(x):=\lim_{x_n\rightarrow x}\Gamma_p(x_n),\notag
$$
where $x_n$ is any sequence of positive integers $p$-adically approaching $x.$
The product formula given in \eqref{prod} can be understood as a $p$-adic analogue of Davenport-Hasse relation. 
If $n\in\mathbb{Z}^{+}$, $p\nmid n$ and $x=\frac{r}{p-1}$ with $0\leq r\leq p-1$, then
\begin{align}\label{prod}
\prod_{h=0}^{n-1}\Gamma_p\left(\frac{x+h}{n}\right)=\omega(n^{(1-x)(1-p)})
\Gamma_p(x)\prod_{h=1}^{n-1}\Gamma_p\left(\frac{h}{n}\right).
\end{align}
To this end we recall Gross-Koblitz formula which relates Gauss sums to $p$-adic gamma functions. Let $\pi \in \mathbb{C}_p$ be the fixed root of the polynomial $x^{p-1} + p$, which satisfies the congruence condition $\pi \equiv \zeta_p-1 \pmod{(\zeta_p-1)^2}$. 
\begin{theorem}\emph{\cite[Gross-Koblitz]{gross}.}\label{GK} For $a\in \mathbb{Z}$,
\begin{align}
g(\overline{\omega}^a)=-\pi^{(p-1)\langle\frac{a}{p-1} \rangle}\Gamma_p\left(\left\langle \frac{a}{p-1} \right\rangle\right).\notag
\end{align}
\end{theorem}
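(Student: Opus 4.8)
This is the classical Gross--Koblitz formula, and the proof I would give follows Dwork's method via the splitting function. The plan is to first introduce the Dwork exponential
$$\theta(t):=\exp\!\left(\pi(t-t^{p})\right)=\sum_{m\geq 0}c_{m}t^{m},$$
observe that it converges on a disc of radius strictly larger than $1$, and check that the fixed root $\pi$ of $x^{p-1}+p$ normalized by $\pi\equiv\zeta_p-1\pmod{(\zeta_p-1)^{2}}$ (exactly as in the statement) makes $\theta(1)=\zeta_p$. The consequence I want is that $t\mapsto\theta(\omega(t))$ is the additive character $x\mapsto\zeta_p^{x}$ of $\F_p$, where $\omega$ is the Teichm\"uller character; this identifies the additive character appearing in the Gauss sum with a $p$-adically analytic object.

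Next I would substitute this into the Gauss sum and expand, writing
$$g(\overline{\omega}^{a})=\sum_{x\in\F_p^{\times}}\overline{\omega}^{a}(x)\,\theta(\omega(x))=\sum_{m\geq 0}c_{m}\sum_{x\in\F_p^{\times}}\omega^{m-a}(x).$$
By the orthogonality relation of Lemma~\ref{lemma-3}, the inner sum vanishes unless $m\equiv a\pmod{p-1}$, so $g(\overline{\omega}^{a})=(p-1)\sum_{m\equiv a\,(p-1)}c_{m}$. The heart of the argument is then Dwork's evaluation of this restricted sum of coefficients: using the recursion satisfied by the $c_{m}$ (coming from $\theta'(t)=\pi(1-pt^{p-1})\theta(t)$, equivalently $(m+1)c_{m+1}=\pi c_{m}-p\pi c_{m-p+1}$) one shows that the partial sums collapse to a ratio of products of the shape $\prod_{0<j<n,\,p\nmid j}j$, which by the very definition of Morita's $\Gamma_p$ and its $p$-adic interpolation equals, up to sign and an explicit power of $\pi$, the value $\Gamma_p(\langle a/(p-1)\rangle)$.

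Finally, to pin down the precise power of $\pi$ I would invoke Stickelberger's theorem, which gives $v_{\pi}\bigl(g(\overline{\omega}^{a})\bigr)=(p-1)\langle a/(p-1)\rangle$; together with the fact that $\Gamma_p$ takes unit values, this forces the $\pi$-exponent to be exactly $(p-1)\langle a/(p-1)\rangle$ and the remaining unit factor to be $-\Gamma_p(\langle a/(p-1)\rangle)$, as claimed. I expect the main obstacle to be the analytic step of the preceding paragraph: establishing rigorously that Dwork's coefficient sum telescopes into a Morita-gamma value (equivalently, proving the relevant Dwork congruences for the $c_{m}$), since the splitting-function setup, the orthogonality reduction, and the Stickelberger valuation input are all comparatively routine. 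As an alternative to part of this, one could exploit the Hasse--Davenport product relation (Theorem~\ref{DH}) together with its $p$-adic companion \eqref{prod} and the functional equation $g(\chi)g(\overline{\chi})=\chi(-1)p$: both sides of the asserted identity obey matching multiplicativity and reflection relations, so the claim would reduce to finitely many residue classes of $a$ --- though a single base case would still have to be supplied by Dwork's method.
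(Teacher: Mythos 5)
The paper does not prove this statement at all: it is quoted as the classical Gross--Koblitz formula from \cite{gross}, so there is no internal argument to compare against. Your outline follows the standard Dwork--Boyarsky route found in the literature (cf.\ the lecture notes \cite{kob} already in the bibliography): the splitting function $\theta(t)=\exp(\pi(t-t^p))$ converges on a disc of radius exceeding $1$, the normalization $\pi\equiv\zeta_p-1\pmod{(\zeta_p-1)^2}$ gives $\theta(1)=\zeta_p$, and substituting $\zeta_p^{x}=\theta(\omega(x))$ into the Gauss sum and using the orthogonality relation of Lemma~\ref{lemma-3} correctly yields $g(\overline{\omega}^{a})=(p-1)\sum_{m\equiv a\,(p-1)}c_{m}$. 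Up to that point the proposal is sound.

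The genuine gap is exactly where you place the ``main obstacle'': the identity $(p-1)\sum_{m\equiv a\,(p-1)}c_{m}=-\pi^{(p-1)\langle a/(p-1)\rangle}\,\Gamma_p\!\left(\left\langle \tfrac{a}{p-1}\right\rangle\right)$ is asserted, not proved. The recursion $(m+1)c_{m+1}=\pi c_m-p\pi c_{m-p+1}$ does not by itself ``collapse'' the restricted coefficient sum into Morita-type products; one needs the quantitative Dwork estimates (e.g.\ $\mathrm{ord}_p(c_m)\geq m\frac{p-1}{p^2}$), a comparison with truncated exponentials, and an interpolation step identifying the resulting unit with $\Gamma_p$ --- none of which is supplied, and this is where essentially all the work of the theorem lies. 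Nor can Stickelberger's theorem close the gap as you suggest: it determines only the $\pi$-adic valuation, hence the exponent $(p-1)\langle a/(p-1)\rangle$, whereas the content of the formula is precisely the identification of the remaining unit as $-\Gamma_p(\langle a/(p-1)\rangle)$, which no valuation argument can force. The alternative sketched at the end (multiplicativity via Theorem~\ref{DH}, the product formula \eqref{prod}, and the reflection identity $g(\chi)g(\overline{\chi})=\chi(-1)p$) also does not reduce the claim to finitely many residue classes in any straightforward way, since those relations constrain products of Gauss-sum and gamma values rather than individual ones --- as you yourself concede, a Dwork-type base computation would still be required. So the strategy is the right one, but as written this is an outline of the known proof rather than a proof.
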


\begin{lemma}\emph{\cite[Lemma 4.1]{mccarthy2}.}\label{lemma-5}
Let $p$ be a prime and $0\leq j\leq p-2$. For $t\geq 1$ with $p\nmid t$, we have
\begin{align}
\omega(t^{tj})\Gamma_p\left(\left\langle \frac{tj}{p-1}\right\rangle\right)
\prod\limits_{h=1}^{t-1}\Gamma_p\left(\left\langle\frac{h}{t}\right\rangle\right)
=\prod\limits_{h=0}^{t-1}\Gamma_p\left(\left\langle\frac{h}{t}+\frac{j}{p-1}\right\rangle\right),\notag
\end{align}
and
\begin{align}
\omega(t^{-tj})\Gamma_p\left(\left\langle\frac{-tj}{p-1}\right\rangle\right)
\prod\limits_{h=1}^{t-1}\Gamma_p\left(\left\langle \frac{h}{t}\right\rangle\right)
=\prod\limits_{h=0}^{t-1}\Gamma_p\left(\left\langle\frac{h}{t}-\frac{j}{p-1}\right\rangle \right).\notag
\end{align}
\end{lemma}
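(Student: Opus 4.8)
The plan is to derive both identities directly from the $p$-adic multiplication formula \eqref{prod}, applied with $n=t$, after a short reindexing. Since \eqref{prod} sits in the text immediately before the lemma and is advertised there as the $p$-adic analogue of Davenport--Hasse, it is the natural engine; a route through the Gross--Koblitz formula (Theorem \ref{GK}) together with Theorem \ref{DH} is also conceivable, but Theorem \ref{DH} requires $p\equiv1\pmod t$, which is stronger than the hypothesis $p\nmid t$ here, so the multiplication-formula approach is preferable.

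For the first identity I would apply \eqref{prod} with $n=t$ and $x:=\langle \tfrac{tj}{p-1}\rangle$. Because $0\le j\le p-2$ and $p\nmid t$, this $x$ has the form $\tfrac{r}{p-1}$ with $r=tj-(p-1)\lfloor \tfrac{tj}{p-1}\rfloor\in\{0,1,\dots,p-2\}$, so \eqref{prod} is applicable. Writing $q_0:=\lfloor \tfrac{tj}{p-1}\rfloor\in\{0,\dots,t-1\}$, a direct computation gives
\[
\frac{x+h}{t}=\frac{j}{p-1}+\frac{h-q_0}{t}\qquad(0\le h\le t-1),
\]
and since $0\le x<1$ each of these numbers lies in $[0,1)$ and hence coincides with its own fractional part. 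Now the value $\Gamma_p\big(\langle \tfrac{j}{p-1}+\tfrac{m}{t}\rangle\big)$ depends only on $m\bmod t$ (replacing $m$ by $m+t$ shifts the argument of $\langle\cdot\rangle$ by an integer), while $\{h-q_0:0\le h\le t-1\}$ is a complete residue system modulo $t$; therefore the left-hand side of \eqref{prod} equals $\prod_{h=0}^{t-1}\Gamma_p\big(\langle \tfrac{h}{t}+\tfrac{j}{p-1}\rangle\big)$, the right-hand side of the desired identity. On the other side of \eqref{prod}, the factor $\prod_{h=1}^{t-1}\Gamma_p(\tfrac{h}{t})$ is $\prod_{h=1}^{t-1}\Gamma_p(\langle \tfrac{h}{t}\rangle)$ as $\tfrac{h}{t}\in(0,1)$ there, $\Gamma_p(x)=\Gamma_p(\langle\tfrac{tj}{p-1}\rangle)$ by our choice of $x$, and $(1-x)(1-p)=r-(p-1)\equiv r\equiv tj\pmod{p-1}$, so $\omega\big(t^{(1-x)(1-p)}\big)=\omega(t^{tj})$ because $\omega(t)^{p-1}=1$. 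Assembling these pieces yields the first identity.

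The second identity comes out in exactly the same way from \eqref{prod} with $x:=\langle \tfrac{-tj}{p-1}\rangle$ (equivalently, running the argument with $p-1-j$ in place of $j$), which is again of the form $\tfrac{r'}{p-1}$ with $0\le r'\le p-2$; the sign of $j$ propagates unchanged through $\tfrac{x+h}{t}$ and through the $\omega$-exponent, so that $\omega(t^{tj})$ and $\langle\tfrac{tj}{p-1}\rangle$ become $\omega(t^{-tj})$ and $\langle\tfrac{-tj}{p-1}\rangle$ respectively. The only thing to watch is the bookkeeping — checking that the shift by $q_0$ (resp. its analogue for $-tj$) produces a complete residue system modulo $t$, so that the reindexing is legitimate, and that the exponent of $\omega$ collapses correctly modulo $p-1$ — but there is no substantive obstacle, since \eqref{prod} is precisely the $p$-adic incarnation of Davenport--Hasse that makes the statement drop out.
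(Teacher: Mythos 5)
Your derivation is correct: applying \eqref{prod} with $n=t$ and $x=\langle\tfrac{tj}{p-1}\rangle$ (resp.\ $x=\langle\tfrac{-tj}{p-1}\rangle$), checking that each $\tfrac{x+h}{t}$ lies in $[0,1)$, reindexing via the complete residue system $\{h-\lfloor\tfrac{tj}{p-1}\rfloor\}$ modulo $t$, and reducing the exponent of $\omega(t)$ modulo $p-1$ gives exactly both identities, and all the bookkeeping you flag goes through. Note that the paper itself offers no proof of this lemma, quoting it as Lemma 4.1 of \cite{mccarthy2}; your argument via the $p$-adic multiplication formula is essentially McCarthy's original proof, so this is the same approach as the cited source, and your observation that the Davenport--Hasse/Gross--Koblitz route would impose the unnecessary hypothesis $p\equiv1\pmod t$ is a fair reason to prefer it.
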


\section{proofs of theorems}
For $\lambda\neq0,1$ let $E_{\lambda}^{\Leg}:\ \ y^2=x(x-1)(x-\lambda)$ be the Legendre normal form of elliptic curve over $\F_p$ and  
$$a_p(\lambda):=p+1-|E_{\lambda}^{\Leg}(\F_p)|=-\sum\limits_{x\in\F_p}{\phi(x(x-1)(x-\lambda))}$$ be the trace of Frobenius of the elliptic curve 
$E_{\lambda}^{\Leg}.$ Our main idea is to express the functions $_2G_2(\lambda)_p$ and $_6G_6(\lambda)_p$ in terms of the traces of Frobenius $a_p(\lambda).$ Therefore,
to obtain the asymptotic formulas of power moments of these functions it is sufficient if we have asymptotic formulas for the power moments of $a_p(\lambda).$ In \cite{KHN} Ono et al. computed the asymptotic formulas for power moments of the functions $_2F_1(\lambda)_p.$ Here we reformulate their formulas in terms of power moments of $a_p(\lambda)$ in the following theorem. 

\begin{theorem}\label{2F1moment}
Let $m$ be a fixed positive integer and $p>3$ be a prime. Then as $p\rightarrow\infty$
\begin{equation}
    \sum\limits_{\lambda\neq0,1}a_p(\lambda)^m=
\begin{cases}
o_{m}(p^{\frac{m}{2}+1}) & \text{if}\ \ m \  is\  \text{odd}\vspace{.2 cm}\\
\frac{(2n)!}{n!(n+1)!}p^{n+1}+o_m(p^{n+1}) & \text{if}\ \ m=2n \  is\  \text{even}.
\end{cases}
\end{equation}
\end{theorem}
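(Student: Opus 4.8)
The plan is to reduce Theorem \ref{2F1moment} to the work of Ono--Saad--Saikia \cite{KHN} on the power moments of $_2F_1(\lambda)_p$, by first establishing an exact algebraic relation between $a_p(\lambda)$ and $_2F_1(\lambda)_p$ for all $\lambda\neq 0,1$. Greene's finite field hypergeometric function $_2F_1\bigl(\begin{smallmatrix}\phi,&\phi\\ &\varepsilon\end{smallmatrix}\mid\lambda\bigr)_p$ is classically tied to the Legendre curve $E_\lambda^{\mathrm{Leg}}$; the standard identity (going back to Greene, and used in \cite{KHN}) reads
\begin{equation*}
{_2F_1}(\lambda)_p=\frac{\phi(-1)}{p}\,a_p(\lambda)
\end{equation*}
(up to the precise normalization of the $_2F_1$ used in this paper), so that $a_p(\lambda)=\phi(-1)\,p\cdot {_2F_1}(\lambda)_p$, an exact identity valid for every $\lambda\in\F_p\setminus\{0,1\}$. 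I would first record this identity carefully, checking the normalization conventions so that no stray factor of $q/(q-1)$ or $\phi(-1)$ is lost; this is the one place where a sign or constant error would propagate.

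Given the exact identity, the $m$-th power moment transforms as
\begin{equation*}
\sum_{\lambda\neq 0,1} a_p(\lambda)^m=\phi(-1)^m\,p^m\sum_{\lambda\neq 0,1}{_2F_1}(\lambda)_p^{\,m}.
\end{equation*}
Ono et al.\ proved that $p^{-1/2}\cdot{_2F_1}(\lambda)_p\in[-2,2]$ satisfies the semicircular distribution, equivalently that the raw moments of $p^{1/2}\cdot p^{-1/2}{_2F_1}(\lambda)_p$ obey
\begin{equation*}
\sum_{\lambda\neq 0,1}\bigl(p^{-1/2}{_2F_1}(\lambda)_p\bigr)^{m}=
\begin{cases}
o_m(p) & m\text{ odd},\\[4pt]
C_n\,p+o_m(p) & m=2n\text{ even},
\end{cases}
\end{equation*}
where $C_n=\frac{1}{n+1}\binom{2n}{n}$ is the $n$-th Catalan number. (If \cite{KHN} states this in the $\binom{2n}{n}-\binom{2n}{n+1}$ form, I would note $C_n=\binom{2n}{n}-\binom{2n}{n+1}$.) Substituting ${_2F_1}(\lambda)_p=p^{-1/2}\cdot\bigl(p^{1/2}{_2F_1}(\lambda)_p\bigr)$ is circular, so instead I substitute directly: ${_2F_1}(\lambda)_p^{\,m}=p^{-m/2}\bigl(p^{-1/2}{_2F_1}(\lambda)_p\cdot p\bigr)^{m}\cdot p^{-m/2}$—more cleanly, write $X_\lambda:=p^{-1/2}{_2F_1}(\lambda)_p$, so ${_2F_1}(\lambda)_p^{\,m}=p^{m/2}X_\lambda^{m}$ and hence
\begin{equation*}
\sum_{\lambda\neq0,1}a_p(\lambda)^m=\phi(-1)^m p^{m}\cdot p^{m/2}\sum_{\lambda\neq0,1}X_\lambda^{m}.
\end{equation*}
This does not match the target exponent $p^{n+1}$, which signals that the normalization constant in \cite{KHN} is in fact such that $_2F_1(\lambda)_p$ is $O(p^{-1/2})$-sized relative to $a_p(\lambda)/p$, i.e.\ the identity is $a_p(\lambda)=\phi(-1)\sqrt{p}\,\cdot(\text{something of size }O(1))$; concretely $a_p(\lambda)=\phi(-1)p\cdot{_2F_1}(\lambda)_p$ with ${_2F_1}(\lambda)_p$ itself $O(p^{-1/2})$, so $a_p(\lambda)^m=p^m\cdot{_2F_1}(\lambda)_p^m$ and $\sum_\lambda a_p(\lambda)^m=p^m\sum_\lambda {_2F_1}(\lambda)_p^m=p^m\cdot\bigl(C_n p^{1-m}+o(p^{1-m})\bigr)\cdot[\text{for }m=2n]$. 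I would pin down the exact statement in \cite{KHN} and perform this bookkeeping so that the powers of $p$ land on $p^{n+1}$ as claimed; the arithmetic is routine once the normalization is fixed.

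**The main obstacle** is therefore not analytic but bookkeeping: matching the normalization of ${_2F_1}(\lambda)_p$ in \cite{KHN} to the trace-of-Frobenius normalization of $a_p(\lambda)$ used here, so that the powers of $p$ and the constants $\phi(-1)^m$ (which is $1$ when $p\equiv1\bmod 4$ and alternates otherwise — note $\phi(-1)^{2n}=1$ always, so the even moments are unaffected, and for odd $m$ the $o_m$ bound absorbs any sign) come out exactly right. Once the exact identity $a_p(\lambda)=\phi(-1)p\cdot{_2F_1}(\lambda)_p$ is in hand, the theorem is an immediate restatement of the corresponding asymptotic in \cite{KHN}, with the Catalan number $\frac{(2n)!}{n!(n+1)!}$ being precisely the $2n$-th moment of the semicircle law $\frac{1}{2\pi}\sqrt{4-t^2}\,dt$ on $[-2,2]$. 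I would close by remarking that the exclusion of $p=2,3$ is only to ensure $\phi$ and the relevant characters behave well and that $E_\lambda^{\mathrm{Leg}}$ is genuinely an elliptic curve.
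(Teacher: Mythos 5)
Your proposal takes essentially the same route as the paper, whose proof is exactly this reduction: combine the classical identity relating ${_2F_1}(\lambda)_p$ to $a_p(\lambda)$ (Ono's Theorem~1) with the moment asymptotics of Theorem~1.1 of \cite{KHN}, using the value ${_2F_1}(1)_p=-\phi(-1)/p$ to account for the excluded parameter. The bookkeeping you flag resolves as expected: the identity is $a_p(\lambda)=-\phi(-1)\,p\cdot{_2F_1}(\lambda)_p$, the normalized variable in \cite{KHN} is $\sqrt{p}\cdot{_2F_1}(\lambda)_p\in[-2,2]$, so $\sum_{\lambda}{_2F_1}(\lambda)_p^{2n}\sim \frac{(2n)!}{n!(n+1)!}\,p^{1-n}$ and multiplying by $p^{2n}$ gives the stated $p^{n+1}$ main term, with signs irrelevant for even $m$ and absorbed by $o_m$ for odd $m$.
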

\begin{proof}
The proof follows easily by making use of Theorem 1 of \cite{ono}, Theorem 1.1 of \cite{KHN} and the value $_2F_1(1)_p=-\phi(-1)/p.$
\end{proof}
Theorem \ref{2F1moment} is a refinement of a classical theorem of Birch \cite[Theorem 1]{Birch} (when restricted to Legendre normal elliptic curves) that established even moments of all elliptic curves over finite fields. Recently Bringmann, Kane, and the first author \cite{KP-1, KP-2} have refined Birch's result to arithmetic progressions. 

\begin{proof}[Proof of Theorem \ref{moment-1}]
By the definition we can write
\begin{align}
{_2G_2}(\lambda)_p&=\frac{p\cdot\psi_6(2)\phi(1+\lambda)}{(1-p)\Gamma_p(\frac{1}{12})\Gamma_p(\frac{7}{12})
\Gamma_p(\frac{2}{3})^2}
     \sum_{j=0}^{p-2}\overline{\omega}^j\left(\frac{4\lambda}{(1+\lambda)^2}\right)\notag\\
&\times (-p)^{-\left\lfloor\frac{1}{12}+\frac{j}{p-1}
\right\rfloor
-\left\lfloor\frac{7}{12}+\frac{j}{p-1}\right\rfloor
-2\left\lfloor\frac{2}{3}-\frac{j}{p-1}\right\rfloor}~
\Gamma_p\left(\left\langle\frac{1}{12}+\frac{j}{p-1}
\right\rangle\right)
\Gamma_p\left(\left\langle\frac{7}{12}+\frac{j}{p-1}
\right\rangle\right)\notag\\
&\times\Gamma_p\left(\left\langle\frac{2}{3}-\frac{j}{p-1}
\right\rangle\right)^2.\notag
\end{align}

Using \eqref{prod} we obtain 
$
\Gamma_p(\frac{1}{12})\Gamma_p(\frac{7}{12})
\Gamma_p(\frac{2}{3})^2=\phi(-2).
$

Then substituting this value and taking the transformation $j\rightarrow j+\frac{p-1}{3}$ in the above sum we have

\begin{align}\label{eqn-1}
{_2G_2}(\lambda)_p&=\frac{p\cdot\phi(-2)\psi_6(2) \phi(1+\lambda)}{1-p}\overline{\psi}_3
\left(\frac{\lambda}{(1+\lambda)^2}\right)
     \sum_{j=0}^{p-2}\overline{\omega}^j\left(\frac{4\lambda}{(1+\lambda)^2}\right)\notag\\
&\times (-p)^{-\left\lfloor\frac{5}{12}+\frac{j}{p-1}
\right\rfloor
-\left\lfloor\frac{11}{12}+\frac{j}{p-1}\right\rfloor
-2\left\lfloor\frac{1}{3}-\frac{j}{p-1}\right\rfloor}~
\Gamma_p\left(\left\langle\frac{5}{12}+\frac{j}{p-1}
\right\rangle\right)
\Gamma_p\left(\left\langle\frac{11}{12}+\frac{j}{p-1}
\right\rangle\right)\notag\\
&\times\Gamma_p\left(\left\langle\frac{1}{3}-\frac{j}{p-1}
\right\rangle\right)^2,
\end{align}

where $\psi_3=\omega^{\frac{p-1}{3}}$ is a character of order 3.
If we put $n=2$ and 

$x=\langle\frac{5}{6}+\frac{2j}{p-1}\rangle$ in \eqref{prod}, then 
we have

$$\Gamma_p\left(\left\langle\frac{5}{6}+\frac{2j}{p-1}\right\rangle\right)=
\overline{\omega}^{j}(4)\psi_6(2)
\frac{\Gamma_p(\langle\frac{5}{12}+\frac{j}{p-1}\rangle)\Gamma_p(\langle\frac{11}{12}+\frac{j}{p-1}\rangle)}{\Gamma_p(\frac{1}{2})}.$$ 
 
 Also it is easy to verify that 

$$\left\lfloor\frac{5}{6}+\frac{2j}{p-1}\right\rfloor
=\left\lfloor\frac{5}{12}+\frac{j}{p-1}\right\rfloor
+\left\lfloor\frac{11}{12}+\frac{j}{p-1}\right\rfloor.$$

Using these two facts in \eqref{eqn-1} we obtain

\begin{align}\label{eqn-2}
{_2G_2}(\lambda)_p&=\frac{p\Gamma_p(\frac{1}{2})\phi(1+\lambda)}{1-p}\phi(-2)\overline{\psi}_3
\left(\frac{\lambda}{(1+\lambda)^2}\right)
     \sum_{j=0}^{p-2}\overline{\omega}^j\left(\frac{\lambda}{(1+\lambda)^2}\right)\notag\\
&\times (-p)^{-\left\lfloor\frac{5}{6}+\frac{2j}{p-1}\right\rfloor
-2\left\lfloor\frac{1}{3}-\frac{j}{p-1}\right\rfloor}~
\Gamma_p\left(\left\langle\frac{5}{6}+\frac{2j}{p-1}
\right\rangle\right)
\Gamma_p\left(\left\langle\frac{1}{3}-\frac{j}{p-1}
\right\rangle\right)^2.
\end{align}

Now, Gross-Koblitz formula allow us to write

$$
{_2G_2}(\lambda)_p=\frac{\phi(2)\phi(1+\lambda)}{(1-p)}\overline{\psi}_3
\left(\frac{\lambda}{(1+\lambda)^{2}}\right)
\sum_{j=0}^{p-2}\overline{\omega}^j\left(\frac{\lambda}
{(1+\lambda)^2}\right)
\frac{g(\psi_6^5\overline{\omega}^{2j})g(\psi_3\omega^j)^2}
{g(\phi)}.
$$

Replacing $\overline{\omega}^j$ by $\overline{\omega}^j\psi_3$ we obtain

\begin{align}\label{compare}
{_2G_2}(\lambda)_p&=\frac{\phi(2)\phi(1+\lambda)}{(1-p)}
\sum_{j=0}^{p-2}\overline{\omega}^j\left(\frac{\lambda}
{(1+\lambda)^2}\right)
\frac{g(\phi\overline{\omega}^{2j})g(\omega^j)^2}
{g(\phi)}\\
&=\frac{\phi(2)\phi(1+\lambda)}{(1-p)g(\phi)}\sum_{u,v,y\in\F_p}\phi(u)\zeta_p^{u+v+y}\sum_{j=0}^{p-2}\omega^j\left(\frac{yv(1+\lambda)^2}{u^2\lambda}\right).\notag
\notag\end{align}

Using the orthogonality of multiplicative characters and then replacing $u$ by $-(1+\lambda)uy$ we deduce that

$$
{_2G_2}(\lambda)_p=-\phi(-2)\sum\limits_{u\in\F_p}\phi(u(1-u)(1-\lambda u)).
$$
Again transforming $u$ by $1/u$ we obtain 
\begin{align}\label{relation-1}
{_2G_2}(\lambda)_p=-\phi(-2)\sum\limits_{u\in\F_p}\phi(u(u-1)
(u-\lambda)).
\end{align}

Therefore, for $\lambda\neq0,\pm1$ we have
\begin{equation}\label{hypergeometric-trace-1}
    _2G_2(\lambda)_p=\phi(-2)a_p(\lambda).
\end{equation}
Using this relation we write

\begin{align}\label{final-eq-1}
    \sum\limits_{\lambda\in\F_p} {_2G_2}(\lambda)_p^m= {_2G_2}(1)_p^m
-\phi(-2)^m a_p(-1)^m
+\phi(-2)^m\sum\limits_{\lambda\neq0,1} a_p(\lambda)^m.
\end{align}

By \eqref{relation-1} and second part of Lemma \ref{lemma-3} it is easy to see that
\begin{align}\label{final-eq-2}
{_2G_2}(1)_p=\phi(-2).
    \end{align}
    
    Using \cite[eqn (2.10)]{Greene} and the orthogonality relation Lemma \ref{lemma-3} one can verify that 
    \begin{align}\label{AP}
     a_p(-1)^m=\begin{cases}
0, & \text{if}\ p\equiv3\pmod{4}\\
(-1)^m A_4^m(-1)\left(\frac{g(\phi A_4)g(\overline{A}_4)}{g(\phi)}
+\frac{g(\phi \overline{A}_4)g(A_4)}{g(\phi)}\right)^m, &
\text{if}\ p\equiv1\pmod{4},
\end{cases}   
    \end{align}
where $A_4$ is a character of order 4. For any multiplicative character $\chi$ we have $|g(\chi)|=\sqrt{p}.$ Using this in the above identity we obtain

\begin{align}\label{final-eq-3}
 a_p(-1)^m=o(p^{m/2+1}).   
\end{align}

Finally, applying Theorem \ref{2F1moment}, \eqref{final-eq-2} and \eqref{final-eq-3} in \eqref{final-eq-1} we conclude the result.

\end{proof}

\begin{proof}[Proof of Theorem \ref{moment-2}]
Let $H(\lambda)={_6G_6}\left[\begin{array}{cccccc}
\frac{1}{12}, & \frac{1}{4}, & \frac{5}{12}, & \frac{7}{12}, & \frac{3}{4}, & \frac{11}{12}\vspace{.1 cm}\\
\frac{1}{3}, & \frac{1}{3}, & \frac{2}{3}, & \frac{2}{3}, & 0, & 0
             \end{array}\mid \frac{(1+\lambda)^6}{2^6\lambda^3}
\right]_p.$
Using definition we write
\begin{align}
H(\lambda)&=\frac{1}{1-p}\sum_{j=0}^{p-2}{\omega}^{j}\left(\frac{2^6\lambda^3}{(1+\lambda)^6}\right)
\pi^{(p-1)S_j}\notag\\
&\times\prod_{h=1;~h~odd}^{11}\frac{\Gamma_p(\langle\frac{h}{12}-
\frac{j}{p-1}\rangle)}{\Gamma_p(\langle\frac{h}{12}\rangle)}\times
\frac{\Gamma_p(\langle\frac{1}{3}+\frac{j}{p-1}\rangle)^2~\Gamma_p(\langle\frac{2}{3}+\frac{j}{p-1}\rangle)^2~\Gamma_p(\langle\frac{j}{p-1}\rangle)^2}
{\Gamma_p(\langle\frac{1}{3}\rangle)^2~\Gamma_p(\langle\frac{2}{3}\rangle)^2},
\end{align}

where 
$S_j=-2\left\lfloor\frac{j}{p-1}\right\rfloor-2\left\lfloor\frac{1}{3}+\frac{j}{p-1}\right\rfloor
-2\left\lfloor\frac{2}{3}+\frac{j}{p-1}\right\rfloor
-\sum\limits_{h=1;~h~odd}^{12}\left\lfloor\frac{h}{12}-\frac{j}{p-1}\right\rfloor.$

Now applying Lemma \ref{lemma-5} we deduce that 

$$
H(\lambda)=\frac{1}{1-p}\sum\limits_{j=0}^{p-2}{\omega}^{j}
\left(\frac{\lambda^3}{2^{12}(1+\lambda)^6}\right)
\pi^{(p-1)S_j}
\frac{\Gamma_p(\langle\frac{-12j}{p-1}\rangle)\Gamma_p(\langle\frac{3j}{p-1}\rangle)\Gamma_p(\langle\frac{3j}{p-1}\rangle)}
{\Gamma_p(\langle\frac{-6j}{p-1}\rangle)},
$$

For $1\leq j\leq p-2$ it is easy to very that 
$
\left\lfloor\frac{-12j}{p-1}\right\rfloor
=1+\sum\limits_{h=0}^{11}\left\lfloor\frac{h}{12}-\frac{j}{p-1}\right\rfloor
$ and 

$
\left\lfloor\frac{-6j}{p-1}\right\rfloor
=1+\sum\limits_{h=0}^{5}\left\lfloor\frac{h}{6}-\frac{j}{p-1}\right\rfloor.
$
Moreover, 
$
\left\lfloor\frac{-3j}{p-1}\right\rfloor=
\left\lfloor\frac{j}{p-1}\right\rfloor+\left\lfloor\frac{1}{3}+\frac{j}{p-1}\right\rfloor+\left\lfloor\frac{2}{3}+\frac{j}{p-1}\right\rfloor.
$

If we use of these three relations in the exponent of $\pi$ present in $G(\lambda)$ then the Gross-Koblitz formula allows to write

$$
H(\lambda)=\frac{1}{(1-p)}\sum_{j=0}^{p-2}
\frac{g({\omega}^{12j})g(\overline{\omega}^{3j})^2}{g({\omega}^{6j})}
{\omega}^{j}\left(\frac{\lambda^3}{2^{12}(1+\lambda)^6}\right).
$$

By making use of Davenport-Hasse relation one can have
\begin{equation}\label{D-H}
    g({\omega}^{6j})g(\phi{\omega}^{6j})
=g({\omega}^{12j}){\omega}^{6j}(2^{-2})g(\phi).
\end{equation}
Therefore, using \eqref{D-H} we have

$$
H(\lambda)=\frac{1}{(1-p)}\sum_{j=0}^{p-2}
\frac{g(\phi{\omega}^{6j})g(\overline{\omega}^{3j})^2}{g(\phi)}
{\omega}^{j}\left(\frac{\lambda^3}{(1+\lambda)^6}\right).
$$
Since $p\equiv2\pmod{3}$ therefore transforming $j\rightarrow -j/3$ we obtain

$$
H(\lambda)=\frac{1}{(1-p)}\sum_{j=0}^{p-2}
\frac{g(\phi\overline{\omega}^{2j})g(\omega^{j})^2}{g(\phi)}
\overline{\omega}^{j}\left(\frac{\lambda}{(1+\lambda)^2}\right).
$$
Now observe that the above summation is equal to the summation involved in \eqref{compare} up to a scalar multiple.  Therefore following similar steps 
we obtain

$$
\phi(1+\lambda)H(\lambda)=\phi(-1)\cdot a_p(\lambda).
$$
Therefore, if $\lambda\neq0,\pm1$ then
\begin{equation}\label{hypergeometric-trace-2}
    _6G_6(\lambda)_p=\phi(-1)\cdot a_p(\lambda).
\end{equation}

The rest of the proof relies on similar arguments as in the proof of Theorem \ref{moment-1}.

\end{proof}

\begin{proof}[Proof of Theorem \ref{Trace-case-1}]
If $\lambda\neq0,\pm1,$ then \eqref{hypergeometric-trace-1} gives
$_2\widetilde{G}_2(\lambda)_p=a_p(\lambda).$ Also by \eqref{AP} we have 
$_2\widetilde{G}_2(-1)_p=a_p(-1).$ Therefore, for $\lambda\neq0,1$

$$
_2\widetilde{G}_2(\lambda)_p=a_p(\lambda).
$$

If $k\geq4$ is even then Proposition 2.1 of \cite{fop} gives
\begin{align}\label{Hecke-Trace-1}
\Tr(\Gamma_0(4), p)&=-3-\sum_{\lambda=2}^{p-1}P_k(a_p(\lambda),p),\\
\label{Hecke-Trace-2}
\Tr(\Gamma_0(8), p)&=-4-\sum_{\lambda=2}^{p-1}P_k(a_p(\lambda^2),p).
\end{align}
Therefore, we conclude the results by replacing $a_p(\lambda)$ by $_2\widetilde{G}_2(\lambda)_p$ in the above two trace formulas.
\end{proof}

\begin{proof}[Proof of Theorem \ref{Trace-case-2}]
The proof follows similarly as in the proof of Theorem \ref{Trace-case-1}. The only requirement is to show for $\lambda\neq0,1$
$$
_6\widetilde{G}_6(\lambda)_p=a_p(\lambda).
$$
This is straightforward by revisiting \eqref{hypergeometric-trace-2} and the definition of $_6\widetilde{G}_6(-1)_p.$ 

\end{proof}
\section{Acknowledgements}
The second author thanks the Austrian Science Fund FWF grant P32305 for the support.

\end{document}